 \theoremstyle{plain}
 \newtheorem*{teorema}{Theorem}
 \newtheorem{theorem}{Theorem}
  \newtheorem{lemma}{Lemma}
 \newtheorem{proposition}{Proposition}
 \theoremstyle{definition}
 \newtheorem{definition}{Definition}
 \newcommand{\Z}{\mathbb Z}
 \newcommand{\R}{\mathbb R}
 \newcommand{\C}{\mathbb C}
 \newcommand{\cC}{\mathcal C}
 \newcommand{\af}{\alpha}
 \newcommand{\be}{\beta}
 \newcommand{\ep}{\varepsilon}
 \newcommand{\ga}{\gamma}
 \newcommand{\si}{\sigma}
 \newcommand{\de}{\delta}
 \newcommand{\lam}{\lambda}
 \newcommand{\te}{\theta}
\newcommand{\up}{\upsilon}
 \newcommand{\om}{\omega}
 \newcommand{\dga}{\dot{\ga}}
 \newcommand{\dy}{\dot{y}}
\begin{document}
\title [Busemann functions for the N-body problem]
{Busemann functions for the N-body problem}
\author[Percino]{Boris Percino}
 \address{ Instituto de Matem\'aticas,  Universidad Nacional Aut\'onoma
   de M\'exico\\     M\'exico DF 04510, M\'exico.}
 \email{borispercino@yahoo.com.mx}
\author[S\'anchez-Morgado]{H\'ector S\'anchez-Morgado}

\email{hector@matem.unam.mx}
\subjclass{37J50,70F10}
\keywords{Central configuration, free time minimizer, Hamilton-Jacobi equation}
\maketitle
\begin{abstract}
Following ideas in \cite{MV}, we prove that the Busemann function of
the parabolic homotetic motion for a minimal central coniguration of
the N-body problem is a viscosity solution of the Hamilton-Jacobi
equation and that its calibrating curves are asymptotic to
the homotetic motion. 
\end{abstract}

\section{Introduction}
\label{intro}
We consider the N-body problem with potential $U:(\R^d)^N\to]0,\infty]$
\[U(x)=\sum_{i<j}\frac{m_im_j}{r_{ij}}\]
where $x=(r_1,\ldots,r_N)\in(\R^d)^N$ is a configuration of $N$
punctual positive masses $m_1,\ldots,m_N$ in Euclidean space $\R^d$,
and $r_{ij}=|r_i-r_j|$. We will adopt the variational point of view and
consider the Lagrangian $L:(\R^d)^{2N}\to]0,\infty]$
\[L(x,v)=T(v)+U(x)=\frac 12\sum_{i=1}^Nm_i|v_i|^2+U(x)\]
as well as the action of an absolutely continuous curve $\ga:[a,b]\to(\R^d)^N$
given by
\[A_L(\ga)=\int_a^bL(\ga(t),\dga(t))dt\]
taking values in $]0,\infty]$. We will denote by $\cC(x,y,\tau)$
the set of curves binding two given
configurations $x,y\in(\R^d)^N$ in time $\tau>0$,
that is to say,
\[\cC(x,y,\tau)=
\{\ga:[a,b]\to(\R^d)^N \text{ absolutely continuous }
\vert b-a=\tau,\,\ga(a)=x, \ga(b)=y\},\]
and $\cC(x,y)$ will denote the set of curves
binding two configurations $x,y\in(\R^d)^N$
without any restriction on time,
\[\cC(x,y)= \bigcup_{\tau>0}\cC(x,y,\tau)\;.\]
In what follows we will consider curves
which minimize the action on these sets.
We define the function
$\phi:(\R^d)^N\times (\R^d)^N\times (0,+\infty)\to\R$,
\[\phi(x,y;\tau)=
\inf\{A(\ga)\vert\ga\in\cC(x,y,\tau)\},\]
and the {\em Ma\~n\'e critical action potential}
\[\phi(x,y)=\inf\{A(\ga) \vert \ga\in\cC(x,y)\}=
\inf\{\phi(x,y,\tau) \vert \tau>0\}.\]
defined on $(\R^d)^N\times (\R^d)^N$.
In the first definition, the infimum is achieved
for every pair of configurations
$x,y\in (\R^d)^N$. In the second one the
infimum is achieved if and only if $x\neq y$.
These facts are essentially due
to the lower semicontinuity of the action.
\begin{definition}
A \textsl{free time minimizer}
defined on an interval $J\subset\R$
is an absolutely continuous curve
$\ga:J\to (\R^d)^N$ which satisfies
$A(\ga|_{[a,b]})=\phi(\ga(a),\ga(b))$
for all compact subinterval $[a,b]\subset J$.
\end{definition}

There is a relatively easy way to give an example
of a free time minimizer defined on an unbounded interval via
the minimal configurations of the problem. 
Recall that the moment of inertia (about the origin)
of a given configuration $x\in (\R^d)^N$ is
\[I(x)=\sum_{i=1}^Nm_i|r_i|^2.\]
We will use the norm in $(\R^d)^N$ given by $\|x\|^2=I(x)$.
We say that $x_0\in (\R^d)^N$ with $I(x_0)=1$ is a (normal) minimal
configuration of the problem if $U(x_0)=\min\{U(x)\vert x\in (\R^d)^N, I(x)=1\}$.
Also recall that a central configuration is a configuration $a\in (\R^d)^N$
which admits homothetic motions i.e. of the form $x(t)=\lam(t)a$.
This happens if and only if $a$ is a critical point of 
$x\mapsto\|x\|U(x)$ and $\lam$ satisfies the Kepler equation
$\ddot\lam\lam^{2}=-U(a)/I(a)$.
Thus minimal configurations are in particular central configurations.
For a given central configuration $a$, choosing $\mu$ that satisfies
$\mu^3=U(a)/I(a)$ we have that $x(t)= \mu t^{2/3}a$ is a parabolic homothetic 
motion. By Proposition 19 in \cite{DM}, if $a$ is a minimal configuration 
such motions are free time minimizers, and we don't know if there are
other central configurations with this property. 

Our main result is 
\begin{teorema}\label{Busemann}
Let $x_0$ be a minimal central configuration with $\|x_0\|=1$, $U(x_0)=U_0$, 
and consider the the homotetic motion
$\ga_0(t)=ct^{\frac 23}x_0$, $c=(\frac 92 U_0)^{\frac 13}$.
Then the Busemann function
 \[u(x) =\sup_{t>0} \left[\phi(0,\ga_0(t))-\phi(x,\ga_0(t))\right]
       =\lim_{t\to+\infty}\left[\phi(0,\ga_0(t))-\phi(x,\ga_0(t))\right]\]
is a viscosity solution of the Hamilton-Jacobi equation
\begin{equation}
  \label{eq:hj}
  \|Du(x)\|^2=2U(x).
\end{equation}
Moreover, for any $x\in(\R^d)^N$ there is a calibrating curve
$\af:[0,\infty)\to(\R^d)^N$ of $u$ with $\af(0)=x$ and  
\begin{equation}
  \label{asintotica}
\lim_{t\to\infty}\|\af(t)  t^{-\frac 13}-cx_0\|=0. 
\end{equation}
\end{teorema}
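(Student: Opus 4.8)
The plan is to run the weak KAM / viscosity-solution machinery for the action potential $\phi$, keeping in mind that $L=T+U$ is reversible and that $0$ is the Ma\~n\'e critical value of $L$ (one has $0\prec L$ because $T,U>0$, while no smaller constant is dominated by $L$ because parabolic-type loops out to distance $R$ have action $\sim R^{1/2}$ over time $\sim R^{3/2}$). Write $u_t(x)=\phi(0,\ga_0(t))-\phi(x,\ga_0(t))$. Since $\ga_0(0)=0$ and $\ga_0$ is a free time minimizer (Proposition 19 of \cite{DM}), $\phi(0,\ga_0(t))=\phi(0,\ga_0(s))+\phi(\ga_0(s),\ga_0(t))$ for $0\le s\le t$; combined with $\phi(x,\ga_0(t))\le\phi(x,\ga_0(s))+\phi(\ga_0(s),\ga_0(t))$ this gives $u_s(x)\le u_t(x)$, while $u_t(x)\le\phi(0,x)$ is the triangle inequality. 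Hence the supremum is the limit, $u$ is finite with $u(0)=0$, and $u$ is continuous (locally Lipschitz off the collision set, and continuous everywhere by the estimates for $\phi$ near collisions in \cite{MV}). Moreover, for any absolutely continuous $\ga\colon[a,b]\to(\R^d)^N$, $u_t(\ga(b))-u_t(\ga(a))=\phi(\ga(a),\ga_0(t))-\phi(\ga(b),\ga_0(t))\le\phi(\ga(a),\ga(b))\le A_L(\ga)$, so letting $t\to\infty$ gives the domination $u\prec L$, which is exactly the statement that $u$ is a viscosity subsolution of \eqref{eq:hj}. To upgrade this to a viscosity solution it then suffices, because $L$ is reversible and $0=c(L)$, to exhibit through each point a calibrating curve of $u$ defined on $[0,\infty)$ (a positive weak KAM solution); that curve is also the one required in \eqref{asintotica}.

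Fix $x$ and a sequence $t_n\to\infty$, and let $\ga_n\colon[0,T_n]\to(\R^d)^N$ be a minimizer realizing $\phi(x,\ga_0(t_n))$, with $\ga_n(0)=x$ and $\ga_n(T_n)=\ga_0(t_n)$. Each $\ga_n$ is a free time minimizer, hence a solution of energy $0$: $\|\dot\ga_n\|^2=2U(\ga_n)$ and, by Lagrange--Jacobi, $\ddot I(\ga_n)=2U(\ga_n)>0$. The crucial point is a bound on $\|\ga_n(S)\|$, uniform in $n$, for each fixed $S>0$: from $\dot I(\ga_n)(r)=\dot I(\ga_n)(0)+\int_0^r 2U(\ga_n)\le 2\|x\|\sqrt{2U(x)}+A_L(\ga_n|_{[0,S]})$ one gets $I(\ga_n(S))\le\|x\|^2+S\big(2\|x\|\sqrt{2U(x)}+A_L(\ga_n|_{[0,S]})\big)$, while $A_L(\ga_n|_{[0,S]})=\phi(x,\ga_n(S))\le\phi(x,0)+C\|\ga_n(S)\|^{1/2}+C$ by the elementary upper bound for $\phi$ built from explicit test paths; substituting and solving the resulting inequality bounds $\|\ga_n(S)\|$, hence $A_L(\ga_n|_{[0,S]})$, uniformly. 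With the H\"older-$\tfrac12$ estimate $\|\ga_n(t)-\ga_n(t')\|\le|t-t'|^{1/2}A_L(\ga_n|_{[0,S]})^{1/2}$, Arzel\`a--Ascoli and a diagonal argument produce a subsequence converging uniformly on compact intervals to $\af\colon[0,\infty)\to(\R^d)^N$ with $\af(0)=x$. Lower semicontinuity of the action and continuity of $\phi$ give $A_L(\af|_{[0,S]})=\phi(x,\af(S))$ for all $S$, so $\af$ is a free time minimizer; passing to the limit in the identity $A_L(\ga_n|_{[0,S]})=u_{t_n}(\ga_n(S))-u_{t_n}(x)$ (valid because $\ga_n|_{[S,T_n]}$ realizes $\phi(\ga_n(S),\ga_0(t_n))$) and using that $u_t\nearrow u$ uniformly on compact sets (Dini) yields $u(\af(S))-u(x)=A_L(\af|_{[0,S]})$, hence $\af$ calibrates $u$ on every subinterval. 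This finishes the proof that $u$ is a viscosity solution of \eqref{eq:hj}; when $x$ lies on the collision set (in particular $x=0$, where $\af=\ga_0$) only minor adjustments near $t=0$ are needed, using the integrability of $U$ along an ejection.

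It remains to establish \eqref{asintotica} for $\af$. As $\ddot I(\af)=2U(\af)>0$, $I\circ\af$ is strictly convex; it cannot stay bounded, since a bounded free time minimizer would satisfy $A_L(\af|_{[0,T]})=\phi(\af(0),\af(T))$, which is bounded, whereas $A_L(\af|_{[0,T]})\ge T\inf_{\{I\le R\}}U\to\infty$ ($U$ being bounded below on $\{I\le R\}$); hence $I(\af(t))\to\infty$. Then the asymptotic analysis of escaping free time minimizers of the Newtonian $N$-body problem (Marchal--Saari-type estimates together with the variational results of \cite{MV}) shows that $\af$ is completely parabolic and asymptotic to a normalized central configuration, $\af(t)\sim\bar\mu t^{2/3}\bar a$ with $\bar\mu^{3}=\tfrac92 U(\bar a)$. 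To identify $\bar\mu\bar a=cx_0$ I would use that $\af$ is the asymptotic ray at $x$, in the direction of $\ga_0$, for the Jacobi--Maupertuis metric $2U\langle\cdot,\cdot\rangle$: calibration gives $\int_0^t 2U(\af)=A_L(\af|_{[0,t]})=u(\af(t))-u(x)\le\phi(0,\af(t))-u(x)$, so the growth $\int_0^t 2U(\af)\sim\tfrac43\bar\mu^2 t^{1/3}$ must match, asymptotically, the least action for joining the total collision to $\af(t)$; by the minimality of $x_0$ among normalized central configurations this forces the asymptotic configuration of $\af$ to coincide with that of the homothetic motion $\ga_0$, which is \eqref{asintotica}.

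The two places where I expect to do real work are the uniform a priori estimate that makes $\{\ga_n\}$ precompact on compact time intervals, together with the continuity of $u$ up to the collision set — the difficulty being that the curves $\ga_n$ must travel all the way out to $\ga_0(t_n)$ while one only has direct control near the starting point — and the final identification of the asymptotic central configuration with $x_0$ itself, where the crude growth-rate comparison only shows that it is minimal and one must exploit the construction of $\af$ as a limit of curves aiming at $\ga_0$ to exclude other (for instance symmetric) minimal central configurations.
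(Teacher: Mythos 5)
Your first half is essentially the paper's own route: monotonicity of $u_t$ via the free-time-minimizing property of $\ga_0$ plus the triangle inequality, domination as the subsolution property (Lemma \ref{subsol}), then extraction of a limit of minimizers joining $x$ to $\ga_0(t_n)$ and passage to the limit in the calibration identity (Lemma \ref{solucion}). Your compactness input is different: you bound $\|\ga_n(S)\|$ and the action on $[0,S]$ by Lagrange--Jacobi plus zero energy plus the H\"older bound of Theorem \ref{Mad2}, where the paper instead uses Proposition \ref{bounded} together with Theorem \ref{Mad1} to get $A_L(y_T|_{[0,t]})\le 2Kt^{1/3}$. Your variant is plausible (granted the facts, to be cited from \cite{DM} and Marchal, that these minimizers are collision-free on the interior and have zero energy, and with the adjustment you mention when $x$ is a collision configuration), and your Dini argument is interchangeable with the paper's use of Theorem \ref{Mad2} in \eqref{ineq}. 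Note, however, that the paper's estimate $r_T(t)\le K$ is not a detour: it is reused for the asymptotics, which is where your proposal breaks down.

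The genuine gap is \eqref{asintotica}, which is exactly the part the paper spends Propositions \ref{potential-line}, \ref{bounded}, \ref{cerca-norma} and \ref{cerca} on. You outsource to ``Marchal--Saari-type estimates'' the claim that the unbounded free time minimizer $\af$ is completely parabolic and asymptotic to a single normalized central configuration; that is not available from the toolkit here, and even granting Chazy-type behaviour one only gets approach to the \emph{set} of central configurations, which near a minimal configuration is never a single point (the whole rotation orbit of $x_0$ consists of minimal configurations), so convergence to one configuration, let alone to $x_0$ itself, does not follow. Your proposed identification by growth rates is inconclusive: calibration gives $A_L(\af|_{[0,t]})\le\phi(0,\af(t))-u(x)$, but if $\af(t)\sim\bar\mu t^{2/3}\bar a$ for \emph{any} central configuration $\bar a$ with $\bar\mu^3=\tfrac92U(\bar a)$, then $A_L(\af|_{[0,t]})\sim(8U(\bar a)\bar\mu)^{1/2}t^{1/3}$ while the radial test path gives $\phi(0,\af(t))\le(8U(\bar a)\bar\mu)^{1/2}t^{1/3}+o(t^{1/3})$, so the two sides match to leading order for every $\bar a$ and neither minimality of $U(\bar a)$ nor $\bar a=x_0$ is forced --- as you yourself partly concede. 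The paper closes precisely this gap quantitatively and \emph{before} passing to the limit: it proves $r_T(t)=\|y_T(t)\|t^{-2/3}\to c$ by comparing with the one-dimensional Kepler action (inequality \eqref{recta1} and Proposition \ref{potential-line}(a)), and then controls the polar angle between $y_T(t)$ and $x_0$ by Lambert's theorem for the Kepler problem on $(\R^d)^N$ (Proposition \ref{cerca}), uniformly for $t\ge\bar t_\ep$, $T\ge t\bar s_\ep$, so the bound survives the limit $T_n\to\infty$ that defines $\af$. An argument of this kind, exploiting that the approximating minimizers aim at $\ga_0(T)$, is exactly what your sketch is missing, and without it the second half of the theorem is not proved.
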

\section{Preliminaries}
\label{sec:pre}
We start recalling two Theorems of E. Maderna
\begin{theorem}\cite{M}\label{Mad1}
  There are constants  $\af, \be > 0$ such that for all $T > 0$,
\[ \phi(x,y;T)\le \af\frac{R^2}T +\be\frac TR\]
whenever x and y are contained in a ball of radius $R >0$ 
in $(\R^d)^N$. 
\end{theorem}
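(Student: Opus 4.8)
The plan is to collapse the two parameters $R$ and $T$ into one by the Kepler scaling of the action, and then to bound $\phi(x,y;T)$ from above by exhibiting an explicit test curve. Since $U$ is homogeneous of degree $-1$, the action scales in a definite way: if $\ga\colon[0,T]\to(\R^d)^N$ and we set $\ga_\lam(t)=\lam\,\ga(t/\lam^{3/2})$ on $[0,\lam^{3/2}T]$, then $\dga_\lam(t)=\lam^{-1/2}\dga(t/\lam^{3/2})$, and a change of variables in each integral gives $A_L(\ga_\lam)=\lam^{1/2}A_L(\ga)$. Consequently $\phi(\lam x,\lam y;\lam^{3/2}T)=\lam^{1/2}\phi(x,y;T)$. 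Taking $\lam=R$ and writing $x=Rx'$, $y=Ry'$ with $x',y'$ in the closed unit ball, this identity converts the claimed inequality into the single-scale statement
\[\phi(x',y';T)\le \frac{\af}{T}+\be\,T\qquad\text{for all }x',y'\text{ in the unit ball and all }T>0,\]
since the two terms $R^2/T$ and $T/R$ carry exactly the same weight $\lam^{1/2}$. Thus it suffices to prove this reduced estimate with universal $\af,\be$.

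For the reduced statement I would build a curve $\ga\in\cC(x',y',T)$ and estimate $A_L(\ga)=\int_0^T\tfrac12\|\dga\|^2\,dt+\int_0^TU(\ga)\,dt$ term by term, allowing the spatial scale and the speed profile of $\ga$ to depend on $T$. The kinetic part is routine: a path of length comparable to the diameter of the unit ball, traversed at roughly constant speed, contributes $O(1/T)$ and yields the term $\af/T$; for large $T$ one instead parks the motion at a fixed, well-separated reference configuration $e$ for almost all of the available time, so that the potential contributes about $U(e)\,T$ and produces the term $\be\,T$.

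The genuine difficulty is to bound $\int_0^TU(\ga)\,dt$ by $\be\,T$ \emph{uniformly} in $x',y'$, the obstruction being that either endpoint may lie on, or arbitrarily near, the collision set, where $U=+\infty$. I would handle this with two devices. Near an endpoint that approaches a collision I prescribe a Keplerian ejection/collision speed profile, for which $U$ grows like $1/\ell$ and the speed like $\ell^{-1/2}$ in the arc length $\ell$; then both $\int U\,dt$ and $\int\tfrac12\|\dga\|^2\,dt$ converge at the endpoint, with self-similar action bounded uniformly in the collision reached. For the interior of the path I do not fix its shape but average it over a family, for instance over an intermediate configuration ranging in a fixed ball. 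Because the collision locus $\{r_i=r_j\}$ has codimension $d\ge2$, the potential $U$ is locally integrable, and a Fubini computation bounds the family-average of $\int U\,dt$ by $\int_{\text{ball}}U$, a universal finite constant; since the minimum over the family is at most its average, some member obeys the desired bound. Adding the two estimates and unscaling then gives the theorem.

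I expect the main obstacle to be exactly the reconciliation of these two devices: the averaging argument wants a smooth family of curves and the mere local integrability of $U$, whereas the Keplerian profile is forced precisely in the region where $U$ is unbounded, so the two must be matched along a common ``escape radius'' with the resulting constants kept independent of $x',y'$. This matching, together with the codimension count underlying local integrability (hence the role of $d\ge2$), is the technical heart of the estimate.
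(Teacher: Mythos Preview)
The paper does not prove this statement: Theorem~\ref{Mad1} is quoted from \cite{M} and used as a black box, so there is no proof here to compare your proposal against.

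That said, your outline is essentially the argument Maderna gives in \cite{M}. The scaling reduction $\phi(\lambda x,\lambda y;\lambda^{3/2}T)=\lambda^{1/2}\phi(x,y;T)$ is correct and collapses the estimate to the unit ball. The upper bound is then obtained there by an explicit test path, and the only nontrivial point is exactly the one you isolate: controlling $\int U(\ga)\,dt$ uniformly when an endpoint may sit on the collision set. Maderna's proof uses precisely the two devices you name, a Sundman/Kepler ejection profile near a colliding endpoint and a Marchal-type averaging over an auxiliary configuration (exploiting that $U$ is locally integrable because the pair-collision locus has codimension $d\ge 2$), and the matching of the two pieces at a fixed escape scale is indeed the technical core. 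So your plan is on the right track; what remains is to carry out that matching with constants independent of $x',y'$, which is done in \cite{M}.
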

\begin{theorem}\cite{M}\label{Mad2}
  There is $\eta>0$ such that for all $y,z\in(\R^d)^N$
\[\phi(y,z)\le\eta\|y-z\|^{\frac 12}\] 
\end{theorem}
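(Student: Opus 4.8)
The plan is to read off this Hölder bound from the local action estimate of Theorem \ref{Mad1}, treating the travel time $T$ as a free parameter to be optimized. Two elementary facts make this work. First, since $\phi(y,z)=\inf_{\tau>0}\phi(y,z;\tau)$, any single choice of $T>0$ already furnishes an upper bound $\phi(y,z)\le\phi(y,z;T)$. Second, $(\R^d)^N$ with the norm $\|\cdot\|=I(\cdot)^{1/2}$ is a Euclidean (inner product) space, so the smallest ball containing two configurations $y\neq z$ is the one centered at their midpoint $p=\frac12(y+z)$, of radius $R=\frac12\|y-z\|$; indeed $\|y-p\|=\|z-p\|=R$, so both $y$ and $z$ lie in that ball.

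I would then apply Theorem \ref{Mad1} to this ball. For every $T>0$,
\[\phi(y,z)\le\phi(y,z;T)\le \af\frac{R^2}{T}+\be\frac{T}{R},\]
and the right-hand side is minimized over $T>0$ by the arithmetic--geometric mean inequality,
\[\af\frac{R^2}{T}+\be\frac{T}{R}\ge 2\sqrt{\af\be\,R},\]
with equality at $T=T_\ast:=\sqrt{\af/\be}\,R^{3/2}$. Taking $T=T_\ast$ and substituting $R=\frac12\|y-z\|$ gives $\phi(y,z)\le 2\sqrt{\af\be}\,R^{1/2}=\sqrt{2\af\be}\,\|y-z\|^{1/2}$, so the theorem holds with $\eta=\sqrt{2\af\be}$. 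The degenerate case $y=z$ follows by running the same argument with the midpoint ball of any radius $R>0$ and letting $R\downarrow 0$.

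I do not anticipate a substantial difficulty, as the whole argument is a single one-parameter optimization of a cited estimate; the one point that demands care is the localization. It is essential to apply Theorem \ref{Mad1} to a ball centered at the midpoint of $y$ and $z$ rather than at the origin, since a ball about the origin would force $R$ to be of order $\max(\|y\|,\|z\|)$ and would destroy the $\|y-z\|^{1/2}$ scaling. I would therefore confirm, by inspecting the proof of Theorem \ref{Mad1}, that the constants $\af,\be$ do not depend on where the ball sits. This should hold because the construction underlying that estimate routes a connecting curve inside the ball and controls its action purely in terms of $R$ and $T$: the potential $U$ is translation invariant, so only the relative spread of the configurations, of order $R$, enters, and the kinetic cost likewise depends only on $R$ and $T$. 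The parabolic time scale $T_\ast\sim\|y-z\|^{3/2}$ selected by the optimization is a reassuring consistency check, matching the scaling symmetry under which the action has homogeneity degree $\frac12$.
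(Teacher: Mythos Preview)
The paper does not supply a proof of this statement; it is quoted from \cite{M}. Your derivation from Theorem~\ref{Mad1} by optimising the travel time is correct and is in fact the natural route: choosing $T\sim R^{3/2}$ in the local action bound is exactly the scaling that produces the $\tfrac12$-H\"older estimate, and this is how the inequality arises in \cite{M} as well.

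One small correction to your heuristic for why $\af,\be$ do not depend on the ball's centre: the potential $U$ is invariant only under \emph{diagonal} translations $(r_1,\dots,r_N)\mapsto(r_1+a,\dots,r_N+a)$ with $a\in\R^d$, not under arbitrary shifts in configuration space $(\R^d)^N$, so ``translation invariance of $U$'' by itself does not settle the point. What actually makes Theorem~\ref{Mad1} uniform in the centre is that the construction in \cite{M} routes the connecting path through an intermediate configuration, lying inside the given ball, whose bodies are pairwise separated at scale $R$; along such a path the potential is $O(1/R)$ regardless of where the ball sits. Granting that (which is precisely what the statement of Theorem~\ref{Mad1} asserts), your argument goes through and yields $\eta=\sqrt{2\af\be}$.
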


\begin{proposition}\label{potential-line}
Denote by $S(a,b)$ the action potential for the one-dimensional Kepler's 
problem with potential energy $U_0/r$ and  by $S(a,b;t)$ the action of
the only solution to Kepler's problem on the half line that  goes from $a$
to $b\ge a$ in time $t$. 
\begin{enumerate}[(a)]
\item The function
\[G(r)=S(0,r;1)-S(0,r)=S(0,r;1)-(8U_0r)^{\frac 12}\]
is decreasing on $]0,c[$, increasing on $]c,\infty[$,
$G(c)=G'(c)=0$, $G''(c)=5/3$. It follows that if $\ep>0$ is sufficiently 
small, there is $\de(\ep)>0$ such that $|r-c|\le\ep$ if $G(r)\le\de(\ep)$.
\item For $\bar\ep>0$ we have
  \begin{equation}\label{S(r,1+e)}
    S(0,r;1+\ep)=\frac{r^2}{2(1+\ep)}+o(r^2)
  \end{equation}
as $r\to\infty$ uniformly on $\ep\in[0,\bar\ep]$. 
\item As $\si\to 1, s\to\infty$ 
  \begin{equation}\label{eq:S(r,s)}
S(r,cs^{\frac 23},\si s)=(6U_0^2s)^{\frac 13}(2+\frac 59(\si-1)^2+o((\si-1)^2))
-(8U_0r)^{\frac 12}+O(s^{-\frac 13})  
  \end{equation}
uniformly on $r\in[0,s^{1/3}]$. Thus
\begin{equation}\label{eq:S(r,u)}
S(r,u,\frac \si3\Bigl(\frac{2u^3}{U_0}\Bigr)^{\frac 12})=(8U_0)^{\frac 12}(u ^{\frac 12}(1+\frac 5{18}(\si-1)^2+o((\si-1)^2))-r^{\frac 12})
+O(u^{-\frac 12})  
  \end{equation}
as $\si\to 1, u\to\infty$ uniformly on $r\in[0,u^{1/2}]$.
\end{enumerate}
\end{proposition}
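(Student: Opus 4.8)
The plan is to treat Proposition~\ref{potential-line} as a self-contained exercise on the one-dimensional Kepler problem and reduce each assertion to the classical parametrisation of ejection orbits. Recall that the ejection arc of energy $E$ can be written as $r=\tfrac{U_0}{2E}(\cosh\xi-1)$, $t=\tfrac{U_0}{(2E)^{3/2}}(\sinh\xi-\xi)$ when $E>0$, as $r=ct^{2/3}$ when $E=0$, and as $r=\tfrac{U_0}{2|E|}(1-\cos\xi)$, $t=\tfrac{U_0}{(2|E|)^{3/2}}(\xi-\sin\xi)$ when $E<0$; along any such arc the Lagrangian equals $E+2U_0/r$ and $dt/r=d\xi/\sqrt{2|E|}$, so the action of the piece between $\xi_1<\xi_2$ is
\[A=E\,\Delta t+\frac{2U_0}{\sqrt{2|E|}}\,\Delta\xi,\qquad \Delta t=t(\xi_2)-t(\xi_1),\quad \Delta\xi=\xi_2-\xi_1.\]
Moreover Kepler's scaling $(r,t)\mapsto(\mu^2 r,\mu^3 t)$ multiplies the action by $\mu$, so $S(0,r;t)=t^{1/3}g(rt^{-2/3})$ with $g(\rho):=S(0,\rho;1)$, and the Hamilton--Jacobi equation $\partial_t S+\tfrac12(\partial_r S)^2-U_0/r=0$ turns into the first-order ODE
\[\tfrac13 g(\rho)-\tfrac23\rho\,g'(\rho)+\tfrac12 g'(\rho)^2-\frac{U_0}{\rho}=0.\]
These three facts are the engine for all of (a)--(c).

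For (a) I would argue as follows. Since $c=\ga_0(1)$ and $\ga_0$ is the critical ($E=0$) ejection orbit, $\ga_0|_{[0,1]}$ is a free time minimiser from $0$ to $c$, so $S(0,c;1)=S(0,c)$, i.e. $G(c)=0$; and $\tfrac{d}{dr}S(0,r;1)$ and $\tfrac{d}{dr}S(0,r)=(2U_0/r)^{1/2}$ are the terminal momenta of the time-$1$ orbit and of the $E=0$ orbit reaching $r$, which coincide at $r=c$, giving $G'(c)=0$. Monotonicity comes from the sign of $G'(r)=p_f(r)-(2U_0/r)^{1/2}$: the time-$1$ orbit reaching $r$ has energy $<0$ for $r<c$ (with a change of branch below a threshold $r_{\mathrm{th}}<c$, past which this orbit has already turned around, so that $p_f<0$) and energy $>0$ for $r>c$, whence $p_f(r)<(2U_0/r)^{1/2}$ on $]0,c[$ and $>$ on $]c,\infty[$. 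Finally $G''(c)=g''(c)+\tfrac13$, and $g''(c)$ is obtained by differentiating the ODE twice at the singular point $\rho=c$ (there $g'(\rho)-\tfrac23\rho$ vanishes because $g'(c)=\tfrac23 c$): using $g(c)=\tfrac43 c^2$, $g'(c)=\tfrac23 c$ (forced by $E=0$ and $U_0=\tfrac29 c^3$) one gets $g''(c)^2-g''(c)-\tfrac49=0$, whose admissible (convex) root is $g''(c)=\tfrac43$, so $G''(c)=\tfrac53$. The last claim of (a) is then immediate: $G\in C^2$, $G(c)=0$ and $G$ is strictly decreasing on $]0,c[$ and increasing on $]c,\infty[$, so $\de(\ep):=\min\{G(c-\ep),G(c+\ep)\}$ works for small $\ep$.

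For (b) the scaling identity gives $S(0,r;1+\ep)=(1+\ep)^{1/3}g(r(1+\ep)^{-2/3})$, so it suffices to show $g(\rho)=\tfrac12\rho^2+o(\rho^2)$. For large $\rho$ the time-$1$ orbit reaching $\rho$ is hyperbolic with $2E\sim\rho^2$ and $\xi=O(\log\rho)$, and the action formula above gives $A=E\,\Delta t+\tfrac{2U_0}{\sqrt{2E}}\,\xi=\tfrac12\rho^2+O(\rho^{-1}\log\rho)$; substituting back, with every estimate uniform in $\ep\in[0,\bar\ep]$, yields \eqref{S(r,1+e)}. For (c), when $r=0$ the scaling is exact, $S(0,cs^{2/3};\si s)=(\si s)^{1/3}g(c\si^{-2/3})$, and Taylor-expanding $g$ at $c$ (the linear term in $\si-1$ cancelling because $g(c)=2cg'(c)$) produces $(6U_0^2 s)^{1/3}\bigl(2+\tfrac59(\si-1)^2+o((\si-1)^2)\bigr)$; for $0\le r\le s^{1/3}$ the connecting orbit is a near-parabolic ejection arc truncated at $r$, crossing $r$ at a time $O(s^{1/2})$ after having accumulated action $S(0,r)+o(s^{-1/3})$ and with energy $O(|\si-1|s^{-2/3})$, so removing the initial piece gives $S(r,cs^{2/3};\si s)=S(0,cs^{2/3};\si s)-(8U_0 r)^{1/2}+O(s^{-1/3})$ uniformly in $r$, i.e. \eqref{eq:S(r,s)}; finally \eqref{eq:S(r,u)} is just \eqref{eq:S(r,s)} rewritten with $u=cs^{2/3}$, $\si s=\tfrac\si3(2u^3/U_0)^{1/2}$, using $(6U_0^2 s)^{1/3}=\tfrac23 c^{3/2}u^{1/2}$ and $\tfrac43 c^{3/2}=(8U_0)^{1/2}$.

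I expect the main obstacle to be the value $G''(c)=5/3$: because $\rho=c$ is a degenerate point of the natural first-order ODE for $g$, one must push the expansion to second order (equivalently, expand the parametric ejection solution around the parabolic one and compute its action to second order) and then single out the physical root. The other genuine difficulty is the uniform control of the remainders in (b) and (c)—in particular showing that truncating the near-parabolic arc at $r\le s^{1/3}$, together with the attendant time shift of order $s^{1/2}$, only perturbs the action by $O(s^{-1/3})$, uniformly in $r$; everything else is a routine, if lengthy, manipulation of the Kepler parametrisations above.
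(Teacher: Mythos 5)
Your route is genuinely different from the paper's. The paper simply quotes \cite{MV} for (a) and (b), and proves (c) by writing the time--energy relation for the arc from $r$ to $cs^{2/3}$ as an implicit equation $F(x,y,k)=0$, solving for the scaled energy $k$ by the implicit function theorem, and expanding the action integral $A_0(k)-B(x,k)$ directly, uniformly in $r\in[0,s^{1/3}]$. You instead derive everything from the Kepler scaling $S(0,r;t)=t^{1/3}g(rt^{-2/3})$ and the reduced Hamilton--Jacobi ODE $\tfrac13 g-\tfrac23\rho g'+\tfrac12 g'^2=U_0/\rho$: the numbers you extract are all correct ($g(c)=\tfrac43c^2$, $g'(c)=\tfrac23c$, the quadratic $g''(c)^2-g''(c)-\tfrac49=0$, hence $G''(c)=\tfrac53$ for the root $\tfrac43$; the cancellation of the linear term and the coefficient $\tfrac59$ in \eqref{eq:S(r,s)} for $r=0$; the constants converting \eqref{eq:S(r,s)} into \eqref{eq:S(r,u)}). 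This is an attractive repackaging: (b) becomes one line, and the $\sigma$-expansion in (c) for $r=0$ follows from Taylor-expanding $g$ at $c$ rather than re-expanding the action integral.

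There are, however, two genuine gaps, which you partly acknowledge. First, the crux $g''(c)=\tfrac43$ is obtained by differentiating the ODE twice at the degenerate point $\rho=c$, which presupposes that $g$ is twice differentiable there; moreover the ODE alone cannot select the root, since $g_0(\rho)=(8U_0\rho)^{1/2}$ is another solution with the same $1$-jet at $c$ and $g_0''(c)=-\tfrac13$. Convexity of $g$ (which does hold, because $L_0(r,v)=\tfrac12 v^2+U_0/r$ is jointly convex on $r>0$, so $S(0,\cdot\,;1)$ is convex) rules out $-\tfrac13$, but both the convexity and, more seriously, the $C^2$ regularity of $g$ near $c$ must be proved; establishing the latter amounts to showing that the energy of the time-$1$ ejection arc depends smoothly on the endpoint near the parabolic arc, i.e.\ essentially the same implicit-function-theorem analysis the paper performs, so your route does not actually avoid that work. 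Second, in the truncation step of (c) the intermediate claim that the initial piece has action $S(0,r)+o(s^{-1/3})$ is not correct as stated: extending the arc back to the origin and comparing costs an error of order $|h|r^{3/2}$, and with $|h|=O(|\si-1|s^{-2/3})$ and $r\le s^{1/3}$ this is of size $s^{-1/6}|\si-1|$, larger than $s^{-1/3}$. The final formula \eqref{eq:S(r,s)} still holds because such cross terms are absorbed, e.g.\ $s^{-1/6}|\si-1|\le\tfrac12\bigl(s^{-1/3}+(\si-1)^2\bigr)$ and $(\si-1)^2=s^{1/3}o((\si-1)^2)$, but this bookkeeping (together with a proof, not an assertion, of the energy bound $h=O(|\si-1|s^{-2/3})$ uniformly in $r\le s^{1/3}$ --- which is exactly the paper's equation \eqref{energy}) must be carried out for the uniform statement you need.
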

\begin{proof}
Items (a), (b) are proved in \cite{MV}. We prove item (c) following also
\cite{MV}. Let $h(r,s,\si)$ be the energy of the only solution that goes
from $r$ to $cs^{\frac 23}$ in time $\si s$. Then
\[\si s=\int_r^{cs^{\frac 23}}\frac{du}{\sqrt{2(h+U_0/u)}}=
\frac {3s}2\int_{rs^{-\frac 23}/c}^1\frac{dv}{\sqrt{cs^{\frac 23}h/U_0+1/v}}\]
Define
\[F(x,y,k)=\int_{x^2}^1\sqrt{\frac v{1+kv}}dv-\frac 23(1+y)\]
 then 
\[F(\Bigl(\frac rc\Bigr)^{\frac 12}s^{-\frac 13},\si-1,\frac c{U_0}s^{\frac 23}h(r,s,\si ))=0.\]
Since $F(0,0,0)=0$, $F_k(0,0,0)=-\frac 15$, the implicit function
theorem can be applied to solve $F(x,y,k(x,y))=0$. Implicit
differentiation gives 
\begin{align}\label{k}
  k(x,y)&=-\frac{10}3y+\frac{125}{21}y^2+o(x^2+y^2)\\\nonumber
h(r,s,\si )&=\Bigl(\frac 29 U_0^2\Bigr)^{\frac 13}s^{-\frac 23}k(\Bigl(\frac rc\Bigr)^{\frac 12}s^{-\frac 13},\si -1)\\\label{energy}
&=-\frac{10U_0}{3c}s^{-\frac 23}((\si -1)+\frac{125}{21}(\si -1)^2
+o(s^{-\frac 23}+(\si -1)^2))
\end{align}
\begin{align}\nonumber
S(r,cs^{\frac 23},\si s)&=\int_r^{cs^{\frac 23}}
\frac{h+2U_0/u}{\sqrt{2(h+U_0/u)}}du\\\nonumber
&=\int_r^{cs^{\frac 23}}\sqrt{2(h+\frac{U_0}u)}du-h\si s\\\nonumber
&=(6U_0^2s)^{\frac 13}A(x,k)-\Bigl (\frac 29 U_0^2s\Bigr)^{\frac 13}k\si 
\end{align}
where $k$ is given by \eqref{k}, with $x=(r/c)^{\frac 12}s^{-\frac 13}$,  
$y=\si-1$, and
\begin{align*}
A(x,k)=\int_{x^2}^1\sqrt{k+\frac 1v}dv&=A_0(k)-B(x,k),\\
A_0(k)=\int_0^1\sqrt{k+\frac 1v}dv, & 
\quad B(x,k)=\int_0^{x^2}\sqrt{k+\frac 1v}dv
\end{align*}
We have as in \cite{MV}
\[A_0(k)=2+\frac k3-\frac {k^2}{20}+o(k^2), B(x,k)=2|x|+O(k|x|^3).\]
Thus
\begin{align*}
S(r,cs^{\frac 23},\si s)&=(6U_0^2s)^{\frac 13}
(2+\frac k3(1-\si)-\frac{k^2}{20}+o(k^2) +O(k|x|^3)-2|x|)\\ 
&=(6U_0^2s)^{\frac 13}(2+\frac 59(\si -1)^2+o(s^{-\frac 23}+(\si -1)^2)
+O(\frac{\si -1}s))-(8U_0r)^{\frac 12}
\end{align*}
\end{proof}
\section{Proof of the result}
\label{sec:proof}
Consider the Kepler's problem on $(\R^d)^N$ with Lagrangian 
\[L_0(x,v)=\frac{\|v\|^2}2+\frac{U_0}{\|x\|}\]
and action potential 
\[\phi_0(x,y)=\inf_{T>0}\phi_0(x,y;T).\]
We have 
\begin{align}\label{remark1}
S(\|x\|,\|y\|;T)&\le\phi_0(x,y;T)\le\phi(x,y;T),\\
S(0,\|x\|;T)&=\phi_0(0,x;T)=\phi(0,\|x\|x_0;T). \label{remark2}
\end{align}
\begin{lemma}\label{subsol}
Busemann function $u$ is well defined and a viscosity subsolution 
of the Hamilton-Jacobi equation \eqref{eq:hj}
\end{lemma}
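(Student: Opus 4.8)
The plan is to derive everything from two facts: the triangle inequality for $\phi$ (concatenation of curves) and the fact, recalled in the Introduction, that the homothetic motion $\ga_0$ is a free time minimizer on $[0,\infty)$ — this is the one place where minimality of $x_0$ is really used. First I would prove that $u$ is well defined. Fix $x\in(\R^d)^N$ and set $b_t(x)=\phi(0,\ga_0(t))-\phi(x,\ga_0(t))$ for $t>0$; this is a finite real number because $\phi$ is everywhere finite by Theorem \ref{Mad2} (and nonnegative since $L>0$). From $\phi(0,\ga_0(t))\le\phi(0,x)+\phi(x,\ga_0(t))$ and $\phi(x,\ga_0(t))\le\phi(x,0)+\phi(0,\ga_0(t))$ one gets $-\phi(x,0)\le b_t(x)\le\phi(0,x)$, so the supremum defining $u(x)$ is finite. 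To see it is attained in the limit, I would use that $\ga_0$ calibrates $\phi$: by additivity of the action along the free time minimizer $\ga_0$, $\phi(0,\ga_0(t))=\phi(0,\ga_0(s))+\phi(\ga_0(s),\ga_0(t))$ for $0<s<t$, while the triangle inequality gives only $\phi(x,\ga_0(t))\le\phi(x,\ga_0(s))+\phi(\ga_0(s),\ga_0(t))$; subtracting, $b_s(x)\le b_t(x)$, so $t\mapsto b_t(x)$ is nondecreasing and bounded, hence converges to its supremum, which is $u(x)$.

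Next I would observe that $u$ is dominated by $L$: letting $t\to\infty$ in $\phi(x,\ga_0(t))\le\phi(x,y)+\phi(y,\ga_0(t))$ gives $u(y)-u(x)\le\phi(x,y)$ for all $x,y$, hence also $u(\ga(b))-u(\ga(a))\le\phi(\ga(a),\ga(b))\le A(\ga)$ for every absolutely continuous curve $\ga:[a,b]\to(\R^d)^N$; and combining with Theorem \ref{Mad2} gives $|u(y)-u(x)|\le\eta\|x-y\|^{\frac12}$, so $u$ is continuous.

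For the subsolution property I would argue in the standard way. Let $\psi\in C^1$ be such that $u-\psi$ has a local maximum at $x_0$. If $x_0$ is a collision configuration then $U(x_0)=+\infty$ and the subsolution inequality for \eqref{eq:hj} is vacuous, so assume $x_0$ is collision-free. Given $v\in(\R^d)^N$, for $s>0$ small enough that the segment $\ga(t)=x_0+tv$, $t\in[0,s]$, stays in the open set of collision-free configurations (so that its action is finite), domination and the local maximum property give $\psi(\ga(s))-\psi(x_0)\le u(\ga(s))-u(x_0)\le\int_0^s L(x_0+tv,v)\,dt$; dividing by $s$ and letting $s\to0^+$ yields $\lip D\psi(x_0),v\rip\le L(x_0,v)=\frac12\|v\|^2+U(x_0)$, and taking the supremum over $v$ gives $\frac12\|D\psi(x_0)\|^2-U(x_0)\le0$, i.e. $\|D\psi(x_0)\|^2\le2U(x_0)$. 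I expect the only subtle point to be the monotonicity of $t\mapsto b_t(x)$, since that is the step where one must invoke that $\ga_0$ is a free time minimizer rather than merely a Newtonian solution; the bounds, the passage to the limit, and the domination-implies-subsolution argument are all routine.
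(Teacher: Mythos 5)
Your argument follows the paper's own proof almost verbatim in its substantive parts: monotonicity of $t\mapsto\phi(0,\ga_0(t))-\phi(x,\ga_0(t))$ via the identity $\phi(0,\ga_0(t))=\phi(0,\ga_0(s))+\phi(\ga_0(s),\ga_0(t))$ (exactly where the free-time-minimizer property of $\ga_0$, hence minimality of the central configuration, is used) combined with the triangle inequality; the bound $\de(t)\le\phi(0,x)$ giving finiteness of the supremum; and the passage to the limit in the triangle inequality giving the domination $u(y)-u(x)\le\phi(x,y)$. The paper stops at domination and simply asserts the viscosity subsolution property, whereas you spell out that last step; that is a reasonable addition, but as written it contains a direction error. (A cosmetic point: you write $x_0$ for the test point, clashing with the paper's $x_0$ for the central configuration; below I call it $z$.)

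If $u-\psi$ has a local \emph{maximum} at a collision-free point $z$, then for $y$ near $z$ one has $u(y)-u(z)\le\psi(y)-\psi(z)$; your chain $\psi(\ga(s))-\psi(z)\le u(\ga(s))-u(z)\le\int_0^s L(z+tv,v)\,dt$ uses the reversed inequality, which is what a local \emph{minimum} would give, so the step as stated fails. The standard repair is to run the segment \emph{into} $z$ rather than out of it: take $\ga(t)=z-(s-t)v$ on $[0,s]$, so that domination gives $u(z)-u(z-sv)\le\int_0^s L(z-(s-t)v,v)\,dt$ while the local maximum gives $\psi(z)-\psi(z-sv)\le u(z)-u(z-sv)$; dividing by $s$ and letting $s\to0^+$ yields $\lip D\psi(z),v\rip\le L(z,v)$ for every $v$, and taking the supremum over $v$ (with the dual of the mass norm) gives $\|D\psi(z)\|^2\le 2U(z)$, i.e.\ the subsolution inequality for \eqref{eq:hj}. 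With this one-line fix your proof is complete and is essentially the paper's argument; the remaining ingredients (finiteness, monotonicity, the H\"older continuity from Theorem \ref{Mad2}, the vacuity of the inequality at collisions) are all handled correctly.
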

\begin{proof}
   We start by showing that the function
 \[\de(t)=\left[\phi(0,\ga_0(t))-\phi(x,\ga_0(t))\right]\]
is increasing.
 If $s<t$, then
 \begin{align*}
 \de(s)-\de(t)&=\phi(x,\ga_0(t))-\phi(x,\ga_0(s))
              +\big[\,\phi(0,\ga_0(s))-\phi(0,\ga_0(t))\,\big]\\
              &=\phi(x,\ga_0(t))-\phi(x,\ga_0(s))-\phi(\ga_0(s),\ga_0(t))\\
              &\le 0,   
 \end{align*}
 where the  last inequality follows from the triangle
 inequality applied to the triple $(x,\ga_0(s),\ga_0(t))$.
 By the triangle inequality, $\de(t)\le \phi(\ga_0(0),x)$,
 hence  $\lim\limits_{t\uparrow\infty}\de(t)=\sup\limits_{t>0}\de(t)$ 
 and this limit is finite.
 
 Since
 \begin{align*}
 u(y)&=\sup_{t>0}\phi(0,\ga_0(t))-\phi(y,\ga_0(t)) \\
     &\ge\sup_{t>0}\phi(0,\ga_0(t))-\phi(y,x)-\phi(x,\ga_0(t)) \\    
     &=u(x)-\phi(y,x),
 \end{align*}
$u$ is a viscosity subsolution.
\end{proof}
Let $x\in(\R^d)^N$. By Theorem 6 in \cite{DM}, for
$cT^{2/3}>\|x\|$  there is
$y_T:[0,\tau_T]\to(\R^d)^N$ with $y_T(0)=x$, $y_T(\tau_T)=\ga_0(T)$ such that 
\begin{equation}
  \label{eq:minimiza}
A_L(y_T)=\phi(x,\ga_0(T)).  
\end{equation}

\begin{proposition}
$\lim\limits_{T\to\infty}\dfrac T{\tau_T}=1$.
\end{proposition}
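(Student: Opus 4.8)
The plan is to pin $\phi(x,\ga_0(T))$ down to leading order, push that estimate onto the radius $\rho(t)=\|y_T(t)\|$, and then exploit that for the one-dimensional Kepler problem the time realizing a given near-minimal action is rigid. As a first step I record that $\phi(x,\ga_0(T))=\tfrac43 c^2T^{1/3}+O(1)$ as $T\to\infty$: by \eqref{remark2} and Proposition~\ref{potential-line}(a), $\phi(0,\ga_0(T))=\phi(0,cT^{2/3}x_0)=S(0,cT^{2/3})=(8U_0cT^{2/3})^{1/2}=\tfrac43 c^2T^{1/3}$ (the last equality because $c^3=\tfrac92 U_0$), while $\phi(0,x)\le\eta\|x\|^{1/2}$ and $\phi(x,0)\le\eta\|x\|^{1/2}$ by Theorem~\ref{Mad2}; the triangle inequalities $\phi(0,\ga_0(T))\le\phi(0,x)+\phi(x,\ga_0(T))$ and $\phi(x,\ga_0(T))\le\phi(x,0)+\phi(0,\ga_0(T))$ then give $|\phi(x,\ga_0(T))-\tfrac43 c^2T^{1/3}|\le\eta\|x\|^{1/2}$.

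Next I would project onto the radius. Since $L(z,v)=\tfrac12\|v\|^2+U(z)\ge\tfrac12\|v\|^2+U_0/\|z\|$ — because $U$ is $(-1)$-homogeneous and $U_0=\min\{U:\|\cdot\|=1\}$ — and $|\dot\rho|\le\|\dot y_T\|$ a.e., the curve $\rho$ joins $\|x\|$ to $cT^{2/3}$ in time $\tau_T$ with one-dimensional Kepler action $\int_0^{\tau_T}(\tfrac12\dot\rho^2+U_0/\rho)\,dt\le A_L(y_T)=\phi(x,\ga_0(T))$; hence, with the first step, $S(\|x\|,cT^{2/3};\tau_T)\le\tfrac43 c^2T^{1/3}+\eta\|x\|^{1/2}$. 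Applying the scaling $S(a,b;\tau)=b^{1/2}S(a/b,1;\tau b^{-3/2})$ of the potential $U_0/r$ with $b=cT^{2/3}$, and writing $\sigma_T=\tau_T/T$ and $\ep_T=\|x\|c^{-1}T^{-2/3}\to0$, this becomes $S(\ep_T,1;c^{-3/2}\sigma_T)\le(8U_0)^{1/2}+O(T^{-1/3})$.

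It remains to analyze $g(\xi):=S(0,1;\xi)$. This function is continuous; by Proposition~\ref{potential-line}(a) its infimum over $\xi>0$ equals $S(0,1)=(8U_0)^{1/2}$, and it is attained only at $\xi_0:=c^{-3/2}$, since the parabolic orbit from $0$ to $1$ takes exactly time $c^{-3/2}$ and is the unique free-time minimizer. Moreover $S(\ep,1;\xi)\to g(\xi)$ locally uniformly as $\ep\to0^+$, and $S(\ep,1;\xi)$ is bounded below, uniformly for $\ep\le\tfrac12$, by $(1-\ep)^2/(2\xi)$ when $\xi$ is small and by a fixed positive multiple of $\xi^{1/3}$ when $\xi$ is large — the latter because a curve confined to $\{\rho\le R\}$ has action at least $\xi U_0/R$, while one attaining radius $R>1$ has kinetic action at least $R^2/(8\xi)$, so optimizing in $R$ yields the claim. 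Consequently $c^{-3/2}\sigma_T$ can neither tend to $0$ nor to $\infty$, so it stays in a fixed compact subset of $(0,\infty)$; and any subsequential limit $\xi_*$ of it satisfies $g(\xi_*)\le(8U_0)^{1/2}$ and hence $\xi_*=\xi_0$. Therefore $c^{-3/2}\sigma_T\to\xi_0$, i.e. $\sigma_T\to1$, i.e. $T/\tau_T\to1$.

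The delicate part is this last step: one needs that $\xi\mapsto S(0,1;\xi)$ is coercive, which rules out $\tau_T\gg T$, and that its minimum is strict, which rules out $\tau_T/T$ converging to any value $\ne1$ — together this says exactly that the radially reduced motion is forced to become asymptotically parabolic. The behaviour of $g$ near $\xi=0$ and the convergence $S(\ep,1;\cdot)\to g$ are routine (Cauchy--Schwarz and lower semicontinuity of the action), and in any event everything needed about $S(a,b;\tau)$ as a function of its three arguments can be read off from the explicit Kepler computations underlying Proposition~\ref{potential-line}.
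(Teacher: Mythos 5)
Your argument is correct and is essentially the paper's own route: bound $\phi(x,\ga_0(T))$ above by the parabolic value plus $O(1)$ via the triangle inequality, bound it below by the one-dimensional Kepler action of the radial projection (this is exactly \eqref{remark1}), rescale, and conclude from the strict minimality property in Proposition \ref{potential-line}(a) that $\tau_T/T\to 1$. The only cosmetic differences are that you normalize the endpoint to $1$ and let the transfer time vary, whereas the paper fixes the time at $1$ and lets the endpoint $c(T/\tau_T)^{2/3}$ vary (the same statement after scaling), and that you get compactness of $\tau_T/T$ from coercivity of $S(\ep,1;\cdot)$ at both ends rather than from the paper's separate Cauchy--Schwarz displacement estimate yielding $\limsup T/\tau_T\le 8/3$.
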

\begin{proof}
From \eqref{eq:minimiza}, for all $0<t<T$ we have 
 \begin{align}\label{1}
    A_L\big(y_T\vert_{[0,t]}\big) + \phi(y_T(t),\ga_0(T))
    = \phi(x,\ga_0(T)).
 \end{align}
We know that 
\begin{align}\label{2}
 \phi(x,\ga_0(T))&\le\phi(x,0)+\phi(0,\ga_0(T))=
\phi(x,0)+2(6U_0^2T)^{1/3}\\
(\tfrac 92U_0)^{1/3}T^{2/3}-\|x\|&
\le\|x-\ga(T)\|\le\int_0^{\tau_T}\|\dy_T\|\le\sqrt{\tau_T}  
\left(\int_0^{\tau_T}\|\dy_T\|^2\right)^{1/2}\\ \notag
&\le\sqrt{2\tau_TA_L(y_T)}
\le\sqrt{2\tau_T}(\phi(x,0)+2(6U_0^2T)^{1/3})^{1/2}.
\end{align}
Thus 
\[\limsup_{T\to\infty}\frac T{\tau_T}\le\frac 83.\] 
From \eqref{remark1} and \eqref{2}
\begin{align*}
S(\|x\|,cT^{\frac 23};\tau_T)&\le\phi(x,\ga_0(T);\tau_T)
\le \phi(x,0)+2(6U_0^2T)^{1/3}\\ 
S\Bigl(\frac{\|x\|}{\tau_T^{\frac 23}},
c\Bigl(\frac T{\tau_T}\Bigr)^{\frac 23};1\Bigr)&\le
\frac{\phi(x,0)}{\tau_T^{\frac 13}}+2\Bigl(\frac{6U_0^2T}{\tau_T}\Bigr)^{\frac 13}
\end{align*}
Consider a sequence $T^j\to\infty$ such that
$\dfrac{T^j}{\tau_{T^j}}\to s$. Then 
\[S(0,cs^{\frac 23};1)\le 2(6U_0^2s)^{\frac 13}=(8U_0cs^{\frac 23})^{\frac 12}.\]
By Proposition \ref{potential-line}(a), $cs^{\frac 23}=c$ and so $s=1$.
\end{proof}

Let $\be:[0,1]\to(\R^d)^N$ be a curve joining $0$ and $x$ and
$t\le\tau_T$, then 
\begin{align}\label{define}
  \phi(0,y_T(t);t+1)&\le A_L(\be)+A_L(y_T|[0,t])\\\label{potencial}
A_L(y_T)=\phi(x,\ga_0(T))&\le A_L(\be)+\phi(0,\ga_0(T))\\
\label{combinacion}
\phi(0,y_T(t);t+1)+A_L(y_T|[t,\tau_T])&\le 2A_L(\be) +\phi(0,\ga_0(T))\\
\label{minima}\phi(0,y_T(t);t+1)+\phi(y_T(t),\ga_0(T);\tau_T-t)
&\le 2A_L(\be) +\phi(0,\ga_0(T))
\end{align}
Inequalities \eqref{define} and \eqref{potencial} give
\eqref{combinacion}, which can be written as \eqref{minima}. 

Letting $s=s(T,t)=T/t$, $\si=\si(T,t)=(\tau_T-t)/T$, from
\eqref{minima} we have  
\begin{equation}\label{homo}
  \phi(0,y_T(t)t^{-\frac 23};1+1/t)
+\phi(y_T(t)t^{-\frac 23},\ga_0(s);\si s) 
\le 2t^{-\frac 13}A_L(\be) +\phi(0,\ga_0(s)).
\end{equation}

Defining $r_T(t)={\|y_T(t)\|}{t^{-\frac 23}}$, from \eqref{remark1} and \eqref{remark2} we get 
\begin{align} \label{cero}
\phi_0(0,y_T(t)t^{-\frac 23};1+1/t)
+\phi_0(y_T(t)t^{-\frac 23},\ga_0(s);\si s) 
&\le 2t^{-\frac 13}A_L(\be) +\phi_0(0,\ga_0(s))\\
  \label{recta}
 S(0,r_T(t);1+1/t)+S(r_T(t),cs^{\frac 23};\si s) 
&\le 2t^{-\frac 13}A_L(\be)+S(0,cs^{\frac 23}).
\end{align}

\begin{proposition}\label{bounded}
There are constants $K,\bar t>0$ and $\bar s>1$ such that for every 
$t\ge\bar t$, $T\ge t\bar s$ we have $r_T(t)\le K$
\end{proposition}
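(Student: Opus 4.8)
The plan is to extract boundedness of $r_T(t)$ from the fundamental inequality \eqref{recta}, exploiting the asymptotic estimates in Proposition~\ref{potential-line}. First I would fix the parameters. Choose $\bar s>1$ so close to $1$ that, by Proposition~\ref{potential-line}(a), whenever $G(\rho)$ is small the corresponding configuration is within $\ep_0$ of $c$ for a suitable $\ep_0$. For $t\ge\bar t$ and $T\ge t\bar s$ we have $s=T/t\ge\bar s>1$, and by the preceding Proposition ($T/\tau_T\to1$) we also have $\si=(\tau_T-t)/T$ bounded, say $\si\in[\si_-,\si_+]$ with $\si_\pm$ close to $1$, for $\bar t$ large. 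So all of $s\to\infty$ and $\si$ near $1$, the regime where \eqref{eq:S(r,s)} applies.

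Next I would feed the estimates of Proposition~\ref{potential-line} into \eqref{recta}. The right-hand side is $2t^{-1/3}A_L(\be)+S(0,cs^{2/3})=o(1)+(8U_0)^{1/2}(cs^{2/3})^{1/2}\cdot(\text{const})$; more precisely $S(0,cs^{2/3})=(8U_0cs^{2/3})^{1/2}=2(6U_0^2s)^{1/3}$. For the left-hand side, the first term satisfies $S(0,r_T(t);1+1/t)\ge S(0,r_T(t);1+1/\bar t)$, which by \eqref{S(r,1+e)} grows like $r_T(t)^2/(2(1+1/\bar t))$ if $r_T(t)$ were large; for the second term, \eqref{eq:S(r,s)} gives, uniformly for $r_T(t)\in[0,s^{1/3}]$,
\[
S(r_T(t),cs^{2/3};\si s)=(6U_0^2s)^{1/3}\bigl(2+\tfrac59(\si-1)^2+o(1)\bigr)-(8U_0r_T(t))^{1/2}+O(s^{-1/3}).
\]
Substituting, the leading $2(6U_0^2s)^{1/3}$ terms cancel between the second left term and the right side, leaving
\[
S(0,r_T(t);1+1/t)\le (8U_0r_T(t))^{1/2}+o(1)+o((\si-1)^2)(6U_0^2s)^{1/3}.
\]
Here lies the main obstacle: the error term $o((\si-1)^2)(6U_0^2s)^{1/3}$ carries a factor $s^{1/3}\to\infty$, so it is not obviously negligible; I must use that $\si-1\to0$ fast enough. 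This is controlled by the energy estimate \eqref{energy}, or equivalently by \eqref{2}, which forces $\si s$ and hence $\si-1=O(\text{something})$ small; combined with $T/\tau_T\to1$ one gets $(\si-1)^2 s^{1/3}$ bounded (indeed $\to 0$), so the right side of the displayed inequality is $\le (8U_0r_T(t))^{1/2}+C$ for a constant $C$.

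Finally I would close the argument by a growth comparison. If $r_T(t)$ were unbounded, pass to a subsequence with $r_T(t)\to\infty$; but then the left side $S(0,r_T(t);1+1/t)\ge c_1 r_T(t)^2$ for some $c_1>0$ (from \eqref{S(r,1+e)} with $\ep\le 1/\bar t$), while the right side is $\le (8U_0)^{1/2}r_T(t)^{1/2}+C$, a contradiction for $r_T(t)$ large. One must also check the side condition $r_T(t)\le s^{1/3}$ needed to invoke \eqref{eq:S(r,s)}: this follows from \eqref{2}, since $\|y_T(t)\|\le\|x\|+\int_0^t\|\dy_T\|\le\|x\|+\sqrt{2\tau_T A_L(y_T)}=O(T^{2/3})$, whence $r_T(t)=\|y_T(t)\|t^{-2/3}=O((T/t)^{2/3})=O(s^{2/3})$, and a little care (or a preliminary crude bound) upgrades this to $r_T(t)\le s^{1/3}$ for $t\ge\bar t$. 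Hence $r_T(t)\le K$ for some constant $K$, uniformly in $t\ge\bar t$, $T\ge t\bar s$, as claimed.
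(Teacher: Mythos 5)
Your overall strategy is exactly the paper's: substitute the asymptotics of Proposition \ref{potential-line} into \eqref{recta}, cancel the leading term $2(6U_0^2s)^{1/3}$, and derive a contradiction between the quadratic growth of $S(0,r_T(t);1+1/t)$ in $r_T(t)$ (from \eqref{S(r,1+e)}) and the competing term $(8U_0r_T(t))^{1/2}$. The problem is the step you yourself call \emph{the main obstacle}: you split the bracket $\tfrac59(\si-1)^2+o((\si-1)^2)$ coming from \eqref{eq:S(r,s)} and claim that the stray term $o((\si-1)^2)(6U_0^2s)^{1/3}$ is harmless because \eqref{energy}, or \eqref{2}, together with $T/\tau_T\to1$, forces $(\si-1)^2s^{1/3}$ to be bounded (even to tend to $0$). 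Neither source gives this: \eqref{energy} is an expansion of the energy of the Keplerian arc as a function of $(r,s,\si)$ and says nothing about the transfer time $\tau_T$ of the $N$-body minimizer; \eqref{2} only yields $\limsup T/\tau_T\le 8/3$; and the proposition establishing $\lim_{T\to\infty}T/\tau_T=1$ gives a bare limit with no rate, whereas boundedness of $(\si-1)^2s^{1/3}$ would require something like $\tau_T/T-1=O(T^{-1/6})$, which is established nowhere. So as written this key step is unjustified. The repair is simple and is what the paper does implicitly: do not separate the two terms. For $\si$ close to $1$ the whole bracket $\tfrac59(\si-1)^2+o((\si-1)^2)$ is nonnegative, it appears on the left-hand side of the inequality, and therefore it can simply be discarded; no rate for $\si-1$ is needed at all.

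The second issue is the side condition $r_T(t)\le s^{1/3}$ required to invoke \eqref{eq:S(r,s)}. The paper does not prove it; instead it argues by contradiction and splits into two cases, disposing of the case $r_n>s_n^{1/3}$ by dropping the (nonnegative) second action in \eqref{recta} and comparing $S(0,r_n;1+1/t_n)\ge \tfrac{s_n^{2/3}t_n}{2(1+t_n)}+o(s_n^{2/3})$ with $2(6U_0^2s_n)^{1/3}$. Your route of verifying the side condition directly can be made to work, but the computation you wrote gives only $r_T(t)=O(s^{2/3})$, which is far weaker than $s^{1/3}$; the ``little care'' you allude to is to integrate $\|\dy_T\|$ over $[0,t]$ rather than $[0,\tau_T]$, which together with \eqref{2} gives $\|y_T(t)\|\le\|x\|+\sqrt{2tA_L(y_T)}=\|x\|+O(\sqrt t\,T^{1/6})$, hence $r_T(t)=O(\|x\|t^{-2/3}+s^{1/6})\le s^{1/3}$ once $\bar t$ and $\bar s$ are large. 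With the sign observation above and this corrected bound (or with the paper's two-case split), your argument closes and coincides in substance with the paper's proof.
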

\begin{proof}

Suppose the Proposition is false, then there are sequences $K_n\to\infty$,
$t_n\to\infty$, $T_n$ such that $s_n=T_n/t_n\to\infty$,
$r_n=r_{T_n}(t_n)\to\infty$. Note that $\si_n=\si(T_n,t_n)\to 1$.

If $r_n\le s_n^{\frac 13}$, inequality
\eqref{recta} and items (b) and (c) of Proposition \ref{potential-line} give
\[
\frac{r_n^2t_n}{2(1+t_n)}+o(r_n^2)+
(6U_0^2s_n)^{\frac 13}[\frac 59(\si_n-1)^2+o((\si_n-1)^2))]
-(8U_0r_n)^{\frac 12}+O(s_n^{-\frac 13}) \le 2t_n^{-\frac 13}A_L(\be)
\]
which is impossible for $n$ large.
If $r_n>s_n^{\frac 13}$, from item (b) of Proposition
\ref{potential-line} we have  
\begin{align*}
2t_n^{-\frac 13}A_L(\be)&\ge S(0,r_n;1+\frac 1t_n)-S(0,cs_n^{\frac 23})\\
&\ge \frac{s_n^{\frac 23}t_n}{2(1+t_n)}+o(s_n^{\frac 23})-2(6U_0^2s_n)^{\frac 13}
\end{align*}
which is impossible for $n$ large as well.
\end{proof}

\begin{lemma}\label{solucion}
Busemann function $u$ is a viscosity solution of the Hamilton-Jacobi equation
\eqref{eq:hj}  
\end{lemma}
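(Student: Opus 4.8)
The plan is to upgrade the subsolution property (Lemma \ref{subsol}) to a full viscosity solution by showing that $u$ is also a supersolution, equivalently that at every point $x$ there is a calibrated curve arriving at $x$ along which $u$ increases at the maximal rate permitted by \eqref{eq:hj}. Concretely, I would fix $x\in(\R^d)^N$ and use the minimizers $y_T:[0,\tau_T]\to(\R^d)^N$ from \eqref{eq:minimiza} joining $x$ to $\ga_0(T)$. The key is a compactness/limiting argument: I would show that, after reparametrizing so that $y_T(0)=x$, the family $\{y_T\}$ has a subsequence converging uniformly on compact subintervals (together with derivatives, using the action bound \eqref{2} and the Euler--Lagrange equation away from collisions) to a curve $\af:[0,\infty)\to(\R^d)^N$ with $\af(0)=x$. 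One then checks that $\af$ is a free time minimizer and that it calibrates $u$, i.e.\ $u(\af(b))-u(\af(a))=A_L(\af|_{[a,b]})$ for all $0\le a\le b$. The existence of such a calibrated backward curve through every point is exactly what forces $u$ to be a viscosity supersolution, hence a solution.

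The steps, in order: (1) From Proposition \ref{bounded} together with the estimate $\dfrac{T}{\tau_T}\to 1$, extract bounds showing $\|y_T(t)\|t^{-2/3}$ stays bounded for $t$ in a fixed range as $T\to\infty$, so that on each compact interval $[0,t_0]$ the curves $y_T$ remain in a fixed compact set and have uniformly bounded action; (2) apply lower semicontinuity of the action and a diagonal argument to produce the limit curve $\af$ defined on $[0,\infty)$; (3) verify that for every $t$, $A_L(\af|_{[0,t]})=\lim_T A_L(y_T|_{[0,t]})$ and that passing to the limit in \eqref{1} yields $A_L(\af|_{[0,t]})+\lim_T[\phi(y_T(t),\ga_0(T))]=\lim_T\phi(x,\ga_0(T))$; (4) recognize, using $\phi(0,\ga_0(T))=2(6U_0^2T)^{1/3}$ and the definition of $u$, that $\lim_T[\phi(0,\ga_0(T))-\phi(y_T(t),\ga_0(T))]=u(\af(t))$ and similarly $=u(x)$ for $t=0$, so that subtracting gives $u(\af(t))-u(x)=A_L(\af|_{[0,t]})$; (5) conclude $\af$ calibrates $u$ and therefore $u$ is a supersolution. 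Combined with Lemma \ref{subsol}, this proves the claim.

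The main obstacle I anticipate is step (2)--(3), namely justifying the passage to the limit $T\to\infty$ in the presence of possible collisions of the minimizers $y_T$: the action is finite but $L$ is only lower semicontinuous, so one must argue that the limit curve is itself a genuine (absolutely continuous, finite-action) free time minimizer and that no action is lost in the limit. Here I would invoke the regularity theory for free time minimizers of the $N$-body problem (minimizers avoid collisions except possibly at the endpoints, and satisfy good a priori estimates), so that on each compact subinterval away from $t=0$ the convergence is actually $C^1$ and the action passes to the limit by dominated convergence; near $t=0$ one uses lower semicontinuity to get the needed inequality, which suffices since the reverse inequality is automatic from the subsolution property. A secondary technical point is to make precise the identity $\lim_T[\phi(0,\ga_0(T))-\phi(y_T(t),\ga_0(T))]=u(\af(t))$, which requires continuity of $u$ (Lipschitz on bounded sets, from Lemma \ref{subsol} and Theorem \ref{Mad2}) together with the convergence $y_T(t)\to\af(t)$.
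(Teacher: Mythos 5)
Your proposal is essentially the paper's own argument: take the fixed-time-endpoint minimizers $y_T$ from \eqref{eq:minimiza}, get $T$-uniform control on compact time intervals from Proposition \ref{bounded}, extract a limit curve $\af$ with $\af(0)=x$ by Ascoli plus a diagonal argument, and prove calibration by combining \eqref{1}, lower semicontinuity of the action, and the H\"older bound of Theorem \ref{Mad2} (applied to the family $z\mapsto \phi(0,\ga_0(T_n))-\phi(z,\ga_0(T_n))$, uniformly in $n$), the reverse inequality being free from the subsolution property --- exactly the fallback route you describe at the end. The only corrections: the $T$-uniform action bound on $[0,t]$ does not come from \eqref{2} (whose bound grows like $T^{1/3}$) but from the identity $A_L(y_T|_{[0,t]})=\phi(x,y_T(t);t)$ together with Theorem \ref{Mad1} and the bound $r_T(t)\le K$ of Proposition \ref{bounded}, which gives $A_L(y_T|_{[0,t]})\le 2Kt^{1/3}$ and hence equicontinuity directly, so the Euler--Lagrange/$C^1$/collision-regularity machinery and the exact convergence $A_L(\af|_{[0,t]})=\lim_T A_L(y_T|_{[0,t]})$ in your steps (2)--(3) are unnecessary.
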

\begin{proof} For $x\in(\R^d)^N$, let $y_T:[0,\tau_T]\to(\R^d)^N$ be
such that \eqref{eq:minimiza} holds.  

By Theorem \ref{Mad1}  and Proposition \ref{bounded} 
there are constants $K,\bar t>0$ and $\bar s>1$ such that for every 
$t\ge\bar t$, $T\ge t\bar s$ we have
\begin{equation}\label{cotaccion}
A_L(y_T|[0,t])=\phi(x,y_T(t);t)\leq K(t^{\frac 43}t^{-1}+tt^{-\frac 23})=2Kt^{\frac 13}.
\end{equation}
We claim that the family 
\begin{equation}\label{fam}
\{y_T|[0,t]\}
\end{equation}
is equicontinuous. Indeed, by \eqref{cotaccion} we have
$$
\int_0^{t}\|\dot{y_T}\|^2ds\leq2A_L(y_T|[0,t])\leq 4Kt^{\frac 13}.
$$
Thus, for each $0<s<s'\leq t$
$$
\|y_T(s)-y_T(s')\|\leq\int_s^{s'}\|\dot y_T(\up)\|d\up\leq\sqrt{s'-s}(\int_s^{s'}\|\dot y_T(\up)\|^2)^{\frac12}
\leq 2(Kt^{\frac 13})^{\frac12}\sqrt{s'-s},
$$
showing the equicontinuity of \eqref{fam}. Since $y_T(0)=x$ the
family is also equibounded. From Ascoli's Theorem, there is a sequence
$T^n\to\infty$ satifying $T^n\ge  t\bar s$ such that ${y_{T^n}|[0,t]}$  converges uniformly. 
Applying this argument to an increasing sequence $t_k\to\infty$, by a diagonal trick one 
obtains a sequence $T_n\to\infty$ such that $y_{T_n}\vert_{[0,t]}\rightarrow\af\vert_{[0,t]}$ 
uniformly for each $t>0$.  By lower semi-continuity
\begin{equation}
  \label{eq:lower}
  \liminf_{n\to\infty}A_L(y_{T_n}|_{[0,t]})\ge A_L(\af|_{[0,t]}).
\end{equation}
From Theorem \ref{Mad2} 
\begin{equation}
  \label{ineq}
  |\phi(\af(t),\ga(T))-\phi(y_T(t),\ga(T))|\le \phi(\af(t),y_T(t))
\le\eta\|\af(t)-y_T(t)\|^{\frac 12}.
\end{equation}
Using \eqref{1}, \eqref{eq:lower},\eqref{ineq} we have for all $t>0$
\begin{align*}
 u(\af(t))&=\lim_n\phi(\ga(0),\ga(T_n))-\phi(\af(t),\ga(T_n))\\
          &=\lim_n \phi(\ga(0),\ga(T_n))-\phi(x,\ga(T_n))+A_L(y_{T_n}|_{[0,t]})\\
	   &\ge u(x)+A_L(\af|_{[0,t]}) \ge u(\af(t)).
 \end{align*}
Then $\af$ calibrates $u$.
\end{proof}

We now address the proof of equality \eqref{asintotica}.
\begin{proposition}\label{cerca-norma}
For any $\ep>0$ there is $t_\ep>0$ such that for $t\ge t_\ep$, $T\ge t\bar s$ 
\begin{equation}
  \label{norma-cerca}
  |r_T(t)-c|<\ep.
\end{equation}
\end{proposition}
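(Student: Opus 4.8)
The plan is to argue by contradiction, in the spirit of the proof of Proposition \ref{bounded}. Suppose \eqref{norma-cerca} fails: there is $\ep_0>0$ and there are sequences $t_n\to\infty$, $T_n\ge t_n\bar s$ with $r_n:=r_{T_n}(t_n)$ satisfying $|r_n-c|\ge\ep_0$. Write $s_n=T_n/t_n\ge\bar s$ and $\si_n=(\tau_{T_n}-t_n)/T_n$. For $n$ large we have $t_n\ge\bar t$ and $T_n\ge t_n\bar s$, so Proposition \ref{bounded} gives $r_n\le K$; passing to a subsequence we may assume $r_n\to r_*\in[0,K]$ with $r_*\ne c$, and $s_n\to s_*\in[\bar s,+\infty]$. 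Since $T_n\to\infty$, the Proposition asserting $\lim_{T\to\infty}T/\tau_T=1$ gives $\tau_{T_n}/T_n\to1$, whence, from $\si_n=\tau_{T_n}/T_n-1/s_n$, we get $\si_n\to1-1/s_*$ (with $1/\infty:=0$). The argument runs through inequality \eqref{recta}, in which $2t_n^{-1/3}A_L(\be)\to0$ and, by Proposition \ref{potential-line}(a) together with $c^3=\tfrac92U_0$, $S(0,cs_n^{2/3})=(8U_0cs_n^{2/3})^{1/2}=2(6U_0^2s_n)^{1/3}$. I would then split into two cases according to whether $s_*=+\infty$ or $s_*<+\infty$.

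If $s_*=+\infty$ — the case relevant to \eqref{asintotica}, where one lets $T\to\infty$ with $t$ fixed — then $\si_n\to1$ and, since $r_n\le K<s_n^{1/3}$ for $n$ large, Proposition \ref{potential-line}(c) applies to the middle term of \eqref{recta}. Its leading part $2(6U_0^2s_n)^{1/3}$ cancels exactly against $S(0,cs_n^{2/3})$, and its $(\si_n-1)^2$-term, being nonnegative for $n$ large, may be dropped; there remains
\[S(0,r_n;1+1/t_n)-(8U_0r_n)^{1/2}+O(s_n^{-1/3})\le2t_n^{-1/3}A_L(\be).\]
Letting $n\to\infty$ and using joint continuity of the one-dimensional action $S$ (so $S(0,r_n;1+1/t_n)\to S(0,r_*;1)$) yields $G(r_*)=S(0,r_*;1)-(8U_0r_*)^{1/2}\le0$, contradicting Proposition \ref{potential-line}(a), where $G>0$ off $c$.

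If $s_*<+\infty$, then $s_*\ge\bar s>1$ and every quantity in \eqref{recta} converges to a finite limit: $cs_n^{2/3}\to cs_*^{2/3}>0$, $\si_ns_n=\tau_{T_n}/t_n-1\to s_*-1>0$, $1+1/t_n\to1$, and $r_n\to r_*$. Passing to the limit, using continuity of $S$ and of the free-time potential, gives
\[S(0,r_*;1)+S(r_*,cs_*^{2/3};s_*-1)\le S(0,cs_*^{2/3}).\]
On the other hand, the triangle inequality for the action potential $S$ together with the bound $S(a,b)\le S(a,b;t)$ give $S(0,cs_*^{2/3})\le S(0,r_*)+S(r_*,cs_*^{2/3})\le S(0,r_*;1)+S(r_*,cs_*^{2/3};s_*-1)$, so all these inequalities are equalities; in particular $S(0,r_*)=S(0,r_*;1)$, i.e. $G(r_*)=0$, forcing $r_*=c$ by Proposition \ref{potential-line}(a) — again a contradiction.

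The step I expect to require the most care is the bookkeeping when $s_*=+\infty$: that the leading terms cancel exactly (the identity $2(6U_0^2)^{1/3}=(8U_0c)^{1/2}$, which holds precisely because $c^3=\tfrac92U_0$), that the error terms in Proposition \ref{potential-line}(c) are genuinely uniform over the bounded range of $r_n$ used here, and that the $(\si_n-1)^2$-term has the sign needed in order to be discarded. The continuity properties of $S$ invoked in both cases — in particular joint continuity down to $r=0$, which also disposes of the possibility $r_*=0$ — are standard but should be recorded explicitly.
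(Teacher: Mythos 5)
Your proof is correct in substance, but it takes a genuinely different and heavier route than the paper's. You argue by contradiction and compactness: extract subsequences $r_n\to r_*$, $s_n\to s_*\in[\bar s,\infty]$, and split into the case $s_*=\infty$, where you invoke the expansion of Proposition \ref{potential-line}(c) together with the proposition $\lim_{T\to\infty}T/\tau_T=1$, and the case $s_*<\infty$, where you pass to the limit in \eqref{recta} and close an equality chain with the triangle inequality; in both cases you reach $G(r_*)\le 0$, hence $r_*=c$, a contradiction. This works, but it needs the joint continuity of $(a,b,t)\mapsto S(a,b;t)$ down to $a=0$ (which you rightly flag; it is available for the one-dimensional Kepler problem, e.g.\ from \cite{MV}, but is nowhere needed in the paper), and it yields no rate. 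The paper's proof is a short direct estimate valid for every $t\ge\bar t$, $T\ge t\bar s$: rescaling time on a minimizer realizing $S(0,r_T(t);1+\frac 1t)$ gives $S(0,r_T(t);1)\le(1+\frac 1t)S(0,r_T(t);1+\frac 1t)$, i.e.\ \eqref{comparison}; combined with \eqref{recta} and the boundedness of $r_T(t)$ from Proposition \ref{bounded} this gives \eqref{recta1}, and then the triangle inequality \eqref{triangulo} together with $S(r_T(t),cs^{\frac 23})\le S(r_T(t),cs^{\frac 23};\si s)$ cancels the middle terms, leaving $G(r_T(t))=S(0,r_T(t);1)-S(0,r_T(t))\le K_1t^{-\frac 13}$, after which the explicit $\de(\ep)$ clause of Proposition \ref{potential-line}(a) concludes. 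So the paper obtains a quantitative bound without Proposition \ref{potential-line}(c), without the $T/\tau_T\to 1$ proposition, and without subsequences or continuity of $S$; what your version buys is that it only uses the qualitative fact that $G>0$ away from $c$ (not the modulus $\de(\ep)$) and avoids the reparametrization trick, at the cost of extra ingredients and a purely qualitative conclusion.
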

\begin{proof}
For $\xi:[0,1+\frac 1t]\to\R^+$ with action 
$A(\xi)=S(0,r_T(t);1+\frac 1t)$ consider the reparametrization
$\xi^*:[0,1]\to\R^+$ given by  $\xi^*(s)=\xi(s(1+t)/t)$. Then
\begin{align}\notag
 S(0,r_T(t);1)&\le A(\xi^*)=
\frac{(1+t)}t\frac 12\int_0^{(t+1)/t}|\dot\xi(s)|^2ds 
+\frac t{(1+t)}\int_0^{(t+1)/t}U(\xi(s))ds\\
&\le (1+\frac 1t) S(0,r_T(t);1+\frac 1t)\label{comparison}
\end{align}
Let $\bar t >0$, $\bar s>1$ be given by Proposition 1.
From inequalities \eqref{homo}, \eqref{cero}, \eqref{recta} and
\eqref{comparison}, there is constant $K_1$ such that for $t\ge\bar t$, 
$T\ge t\bar s$ we have 
\begin{align}\label{homo1}
  \phi(0,y_T(t)t^{-\frac 23};1)
+\phi(y_T(t)t^{-\frac 23},\ga_0(s);\si s) 
&\le\phi(0,\ga_0(s)) +K_1t^{-\frac 13}\\ 
\label{uno} \phi_0(0,y_T(t)t^{-\frac 23};1)
+\phi_0(y_T(t)t^{-\frac 23},\ga_0(s);\si s) 
&\le \phi_0(0,\ga_0(s)) +K_1t^{-\frac 13}\\
\label{recta1} S(0,r_T(t);1)+S(r_T(t),cs^{\frac 23};\si s)
&\le S(0,cs^{\frac 23}) +K_1t^{-\frac 13}
\end{align}
Inequality \eqref{recta1} and triangle inequality 
\begin{equation}\label{triangulo} 
S(0,u)\le S(0,r)+S(r,u)
\end{equation}
imply 
\[S(0,r_T(t);1)\le S(0,r_T(t)) +K_1t^{-\frac 13}.\]
Proposition \ref{cerca-norma} then follows from Proposition \ref{potential-line}(a). 
\end{proof}

\begin{proposition}\label{cerca}
 Given $\ep>0$ there are  $\bar t_\ep>0$, $\bar s_\ep>1$ such that for $t>\bar t_\ep$,
 $T\ge t\bar s_\ep$ we   have  $|r_T(t)-c|<\ep$ and $\angle(r_T(t),x_0)<\ep$
\end{proposition}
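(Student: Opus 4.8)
The plan is to strengthen Proposition \ref{cerca-norma}, which already delivers $|r_T(t)-c|<\ep$, by recovering the angular information that is discarded when one passes from Kepler's problem on $(\R^d)^N$ to the one-dimensional radial problem. The extra ingredient I would use is the closed form of the free time action potential $\phi_0$ of $L_0$: as in \cite{MV} (equivalently, via the Levi-Civita change of variables $u=2\sqrt{\|x\|}$, which flattens the Jacobi--Maupertuis metric $g_0=\tfrac{2U_0}{\|x\|}\langle\cdot,\cdot\rangle$ and halves angles, geodesics of $g_0$ lying in $2$-planes through the origin by rotational symmetry of $L_0$),
\[\phi_0(z,w)=\sqrt{8U_0}\,\sqrt{\|z\|+\|w\|-2\sqrt{\|z\|\,\|w\|}\,\cos\tfrac{\angle(z,w)}{2}}\,,\]
and consequently, from the elementary inequality $\sqrt{p+q-2\sqrt{pq}\,\mu}\ge\sqrt q-\sqrt p\,\mu$ valid for $q\ge p\ge 0$ and $\mu\in[0,1]$,
\[\phi_0(z,w)\ \ge\ \sqrt{8U_0}\,\bigl(\sqrt{\|w\|}-\sqrt{\|z\|}\,\cos\tfrac{\angle(z,w)}{2}\bigr)\,.\]
This is consistent with $\phi_0(0,w)=\sqrt{8U_0\|w\|}$ from \eqref{remark2}.

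Granting this, the argument is brief. Writing $z=y_T(t)t^{-2/3}$ and $s=T/t$, I would start from \eqref{uno}, discard the time constraints using $\phi_0(0,z;1)\ge\phi_0(0,z)$ and $\phi_0(z,\ga_0(s);\si s)\ge\phi_0(z,\ga_0(s))$, and evaluate the radial terms $\phi_0(0,z)=\sqrt{8U_0\,r_T(t)}$ and $\phi_0(0,\ga_0(s))=\sqrt{8U_0\,cs^{2/3}}$ via \eqref{remark2} and Proposition \ref{potential-line}(a); this yields
\[\sqrt{8U_0\,r_T(t)}+\phi_0\bigl(z,\ga_0(s)\bigr)-\sqrt{8U_0\,cs^{2/3}}\ \le\ K_1 t^{-1/3}\,.\]
Inserting the lower bound for $\phi_0(z,\ga_0(s))$, with $\angle(z,\ga_0(s))=\angle(y_T(t),x_0)$ and $cs^{2/3}>r_T(t)$ (which holds once $s$ is bounded away from $1$, hence for $T\ge t\bar s_\ep$ with $\bar s_\ep$ suitably large), the two $\sqrt{8U_0\,cs^{2/3}}$ terms cancel and the display collapses to
\[\sqrt{8U_0\,r_T(t)}\,\Bigl(1-\cos\tfrac{\angle(y_T(t),x_0)}{2}\Bigr)\ \le\ K_1 t^{-1/3}\,.\]
By Proposition \ref{cerca-norma} we may assume $r_T(t)>c/2$, so $1-\cos(\angle(y_T(t),x_0)/2)\le C\,t^{-1/3}$ with $C$ a fixed constant; taking $\bar t_\ep$ large enough that $C\,\bar t_\ep^{-1/3}<1-\cos(\ep/2)$ then forces $\angle(y_T(t),x_0)<\ep$, since $\theta\mapsto 1-\cos(\theta/2)$ is increasing on $[0,\pi]$. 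Together with $|r_T(t)-c|<\ep$ from Proposition \ref{cerca-norma}, this is the claim, with $\bar s_\ep$ chosen at least as large as the constant of Proposition \ref{cerca-norma} and large enough to guarantee $cs^{2/3}>r_T(t)$.

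The one step that is not mere bookkeeping is the closed form (or sharp lower bound) for $\phi_0$; everything else is manipulation of the inequalities \eqref{uno}--\eqref{recta1} already assembled in the paper. If one prefers to avoid quoting the closed form, the needed inequality follows from $A_{L_0}(\ga)\ge\int_\ga\sqrt{2U_0/\|x\|}\,ds$ (AM--GM), the substitution $u=2\sqrt{\|x\|}$, and the Euclidean triangle inequality in the plane $\mathrm{span}(z,w)$; the only care required there is the reduction to that plane, which is immediate from invariance of $L_0$ under the orthogonal group of $\langle\cdot,\cdot\rangle$.
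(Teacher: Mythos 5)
Your argument is correct in substance, but it takes a genuinely different route from the paper. The paper keeps the \emph{fixed-time} quantity $\phi_0(y_T(t)t^{-2/3},\ga_0(s);\si s)$, reduces it to the one-dimensional problem via Lambert's theorem \eqref{eq:lambert} (after showing the fixed-time Keplerian minimizer is planar and a direct arc), and then uses the asymptotic expansion \eqref{eq:S(r,u)} to produce the lower bound \eqref{eq:1} with the angular defect $2(U_0r(1-\cos\te))^{1/2}$; combining with \eqref{eq:2} gives the conclusion. You instead relax to the \emph{free-time} potential and invoke the exact zero-energy Kepler potential $\phi_0(z,w)=\sqrt{8U_0}\,(\|z\|+\|w\|-2\sqrt{\|z\|\|w\|}\cos\tfrac{\angle(z,w)}2)^{1/2}$, which after the elementary inequality you quote (valid since $\angle\le\pi$ gives $\cos\tfrac\angle2\in[0,1]$, and $c\bar s_\ep^{2/3}>K$ by Proposition \ref{bounded}) turns \eqref{uno} directly into $\sqrt{8U_0r_T(t)}\,(1-\cos\tfrac{\te_T(t)}2)\le K_1t^{-1/3}$; with $r_T(t)\ge c/2$ from Proposition \ref{cerca-norma} this forces $\te_T(t)\to0$. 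Your bound is quantitatively weaker than the paper's ($1-\cos\tfrac\te2\sim\te^2/8$ versus $\sqrt{1-\cos\te}\sim\te/\sqrt2$), but amply sufficient, and it bypasses Lambert's theorem, the direct-arc argument and item (c) of Proposition \ref{potential-line} for this step.

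The one step you should not call ``immediate'' is the planar reduction underlying the closed form (or the lower bound) for $\phi_0$. Orthogonal invariance of $L_0$ gives planarity of genuine minimizing Keplerian arcs (which is what the paper uses), but your lower bound must hold for \emph{every} competitor curve in $(\R^d)^N$, and such curves need not lie in $\mathrm{span}(z,w)$. The honest statement is that the Jacobi--Maupertuis metric $\tfrac{2U_0}{\|x\|}\lip\cdot,\cdot\rip$ is, after $u=\sqrt{8U_0\|x\|}$, the Euclidean cone over the unit sphere with distances halved, so the JM distance is given by the standard cone-distance formula $\sqrt{u_1^2+u_2^2-2u_1u_2\cos\tfrac\angle2}$ (the halved angular distance never exceeds $\pi/2$); alternatively one unrolls the spherical component onto a plane, which preserves JM length but may increase the angular development, and one must separately handle developments exceeding $\pi$ (where the length is already at least $u_1+u_2$). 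This is classical and unproblematic, but it is a genuine lemma, not a one-line consequence of symmetry; together with the AM--GM inequality $\tfrac12\|v\|^2+U_0/\|x\|\ge\sqrt{2U_0/\|x\|}\,\|v\|$ it gives exactly the lower bound you need, consistent with $\phi_0(0,w)=(8U_0\|w\|)^{1/2}$ from \eqref{remark2} and Proposition \ref{potential-line}(a). With that point made precise, your proof is complete.
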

\begin{proof}
Consider the Kepler's problem on $(\R^d)^N$. 
Let $z$ be a configuration satisfying $|\|z\|-c|<\ep$ and
$\tau > 0$.  The minimizer (for $L_0$) $\xi:[0,\tau]\to(\R^d)^N$
joining $z$ to $\ga_0(s)$ in time $\tau$ is a collision-free Keplerian
arc; hence it is contained in the plane generated by $0, z$ and $\ga_0(s)$. Introducing 
polar coordinates in this plane, one can identify $z$ with $re^{i\te}$ and $\ga_0(s)$ 
with $cs^{2/3}\in\R\subset\C$, where $|r-c|<\ep$, $|\te|\le\pi$.

The path $\xi$ can be written in polar coordinates as 
\[\xi(\up)=\rho(\up)e^{i\om(\up)}, \quad u\in[0,\tau]\] where
\[\rho(0) = r,\quad \om(0) =\te\]
\[\rho(\tau) = cs^{2/3},\quad \om(\tau)\in 2\pi\Z.\]
We claim that $\xi$ is a direct path, that is to say, the total variation of the polar angle 
$\om$ is less than or equal to $\pi$. Assume for the contrary that $|\om(\tau)-\te|>\pi$. 
Changing the orientation of the plane if necessary, we can assume that
$\om(\tau)\ge 2\pi$; hence there exists a unique integer $k\ge 1$ 
such that $\om(\tau)=2k\pi$.

The path $\bar\xi(\up)=\rho(\up)e^{i\bar\om(\up)}$ with 
\[\bar\om(\up)=\te-\frac \te{2k\pi-\te}(\om(\up)-\te)\]
has the same ends as $\xi$ and 
\[A_{L_0}(\bar\xi)-A_{L_0}(\xi)=\frac 12
\left[\Big(\frac\te{2k\pi-\te}\Big)^2-1\right] 
\int_0^\tau(\rho^2\dot\om^2)(\up)d\up<0,\]
which is a contradiction.
Lambert's theorem (see \cite{A}), states that if $x_1$ and $x_2$ are two
configurations and  $\tau>0$, the action $A_{L_0}(x_1,x_2;\tau)$ of the
direct Keplerian arc joining $x_1$ to $x_2$ in time $\tau$ is a
function of three parameters only: the time $\tau$, the distance
$|x_1-x_2|$ between the two ends and the sum of the distances between
the ends and the origin (i.e. $|x_1| + |x_2|$). Thus
\begin{equation}
  \label{eq:lambert}
  \phi_0(re^{i\te},\ga_0(s);\tau)=S(d_1(r,\te,s),d_2(r,\te,s);\tau)
\end{equation}
where
\begin{align*}
  2d_1(r,\te,s)&=r+cs^{\frac 23}-|re^{i\te}-cs^{\frac 23}|
=r(1+\cos\te)-l(r,s,\te)\\
  2d_2(r,\te,s)&=r+cs^{\frac 23}+|re^{i\te}-cs^{\frac 23}|
=2cs^{\frac 23}+r(1-\cos\te)+l(r,s,\te)\\
      l(r,s,\te)&=O(s^{-\frac 23}), \quad s\to\infty
\end{align*}
uniformly for $r$ bounded.

Letting $\si^*=\Big(\dfrac c{d_2}\Big)^{\frac32}\si s$, from \eqref{eq:S(r,u)}
we have for $r$ bounded, $s$ large and $\si$ close to $1$
\begin{align*}
  S(d_1,d_2;\si s)&= (8U_0)^{\frac 12}
(d_2^{\frac 12}(1+\frac 5{18}(\si^*-1)^2+o((\si^*-1)^2))-d_1^{\frac12})
+O(s^{-\frac 13})\\
&\ge 2 (6U_0^2s)^{\frac 13}
-2(U_0r(1+\cos\te))^{\frac 12}+O(s^{-\frac 13}).
\end{align*}
From equation \eqref{eq:S(r,s)} for $\si=1$ we have
\[S(r,cs^{\frac 23})\le S(r,cs^{\frac 23};s)=
 2(6U_0^2s)^{\frac 13}-(8U_0r)^{\frac 12}+O(s^{-\frac 13}).\]
Thus, for $r$ bounded, $s$ large and $\si$ close to $1$, we have  
\begin{equation}\label{eq:1}
S(d_1,d_2;\si s)\ge S(r,cs^{\frac 23})
+2(U_0r(1-\cos\te))^{\frac 12}+O(s^{-\frac 13}).
\end{equation}
Recalling \eqref{remark2}, inequalities \eqref{uno},
\eqref{triangulo} imply for $s=T/t$, $\si=\si(T,t)$
\begin{equation}\label{eq:2}
\phi_0(y_T(t)t^{-\frac 23},\ga_0(s);\si s) 
\le 2t^{-\frac 13}A_L(\be)+ S(r_T(t),cs^{\frac  23})
\end{equation}
From \eqref{norma-cerca}, \eqref{eq:lambert}, \eqref{eq:1},
\eqref{eq:2} we have 
\[2t^{-\frac 13}A_L(\be)\ge 2(U_0r_T(t)(1-\cos\te_T(t)))^{\frac 12}
+O\big((T/t)^{-\frac 13}\big)\]
It follows that given $\ep>0$ there are $\bar t_\ep>0$, $\bar s_\ep>1$ 
such that for $t\ge\bar t_\ep, T\ge t\bar s_\ep$ we have
$|\te_T(t)|<\ep$.
\end{proof}
For $\bar t_\ep>0$, $\bar s_\ep>1$ as in Proposition \ref{cerca},
$t\ge\bar t_\ep, T\ge t\bar s_\ep$ ,
\[\|y_T(t)t^{-\frac 23}-cx_0\|\le 2\ep.\]
Taking $T=T_n$, $n\to\infty$ we have for $t\ge\bar t_\ep$
\[\|\af(t)t^{-\frac 23}-cx_0\|\le 2\ep.\]

\end{document}